\numberwithin{equation}{section}
\newtheorem{definition}{Definition}
\newtheorem{proposition}{Proposition}
\newtheorem{theorem}{Theorem}
\newtheorem{lemma}{Lemma}
\newtheorem{corollary}{Corollary}
\newtheorem{remark}{Remark}
\newtheorem{example}{Example}
\newtheorem{problem}{Problem}
\newcommand{\g}{\mathfrak{g}}
\newcommand{\D}{\displaystyle}
\newcommand{\ra}{\rightarrow}
\newcommand{\vp}{\varphi}
\newcommand{\ts}{\otimes}
\newcommand{\s}{\sigma}
\newcommand{\hh}{\hbar}
\newcommand{\mc}{\mathbb{K}}
\newcommand{\wt}{\widetilde}
\newcommand{\GVB}{\operatorname*{GVB}}
\newcommand{\VB}{\operatorname*{VB}}
\newcommand{\VH}{\operatorname*{VH}}
\newcommand{\GL}{\operatorname*{GL}}
\newcommand{\id}{\operatorname*{id}}
\newcommand{\End}{\operatorname*{End}}
\begin{document}

\title[Generalized virtual braid groups]{Generalized virtual braid groups, quasi-shuffle product and quantum groups}

\author{Xin FANG}
\address{Institut des Hautes \'Etudes Scientifiques, 35, route de Chartes, Bures-sur-Yvette 91440, France.}
\email{xinfang.math@gmail.com}
\maketitle 

\def\abstractname{Abstract}
\begin{abstract}
We introduce in this paper the generalized virtual braid group on $n$ strands $\GVB_n$, generalizing simultaneously the braid groups and their virtual versions. A Mastumoto-Tits type section lifting shuffles in a symmetric group $\mathfrak{S}_n$ to the monoid associated to $\GVB_n$ is constructed, which is then applied to characterize the quantum quasi-shuffle product. A family of representations of $\GVB_n$ is constructed using quantum groups.
\end{abstract}

\maketitle

\section{Introduction}

Quasi-shuffle product appeared firstly in the work of Newman and Radford \cite{NR79} in aim of constructing the cofree irreducible Hopf algebra on an algebra. This structure was rediscovered by Hoffman \cite{Hof00} to characterize the product of multiple zeta values (MZVs) which simultaneously encodes the product of quasi-symmetric functions, Rota-Baxter algebras (\cite{EG06}, \cite{GK00}) and commutative TriDendriform algebras \cite{Lod}.
\par
The quasi-shuffle product is quantized by Jian, Rosso and Zhang \cite{JRZ11} where a supplementary braid structure on the algebra appearing in the framework of Newman and Radford is considered. Recently, in a joint work with Rosso \cite{FR12}, the whole quantum group is realized using a properly chosen quantum quasi-shuffle structure.
\par
These (quantum) quasi-shuffle products are initially defined either in a functorial way using the universal property or by inductive formulas. Their combinatorial natures are studied by Guo and Keigher \cite{GK00} by introducing the mixable shuffles, which is then generalized to the quantum case by Jian \cite{J13}.
\par
One of the goals of this paper is to reveal symmetries behind the quantum quasi-shuffle structure by giving a representation-theoretical point of view on these products, which is more natural and computable than mixable shuffles. To archieve this goal, we need to introduce the generalized virtual braid groups, generalizing simultaneously the braid group and its virtual version \cite{Kauf}. After that we construct a Matsumoto-Tits type section (an analogue of the lift of permutations to the braid group) lifting $(p,q)$-shuffles from the symmetric group to the generalized virtual braid group which encodes the quantum quasi-shuffle product. Since the quasi-shuffle product is a particular case of its quantization, it is characterized by lifting shuffles to the virtual braid group. A main ingredient in constructing these generalized Matsumoto-Tits sections is the bubble decomposition introduced in Section \ref{Sec3}.
\par
Moreover, a family of representations of the generalized virtual braid groups is constructed using quantum groups and it is hoped to establish some quantum invariants for virtual links. Some other perspectives are discussed in the last section.

\section{Generalized virtual braid groups}\label{Sec1}

Let $\mc$ be a field of characteristic $0$. All vector spaces, algebras and tensor products are over $\mc$ if they are not specified otherwise.

\begin{definition}\label{Def:GVB}
The generalized virtual braid group on $n$ strands $\GVB_n$ is the group generated by $\s_i$, $\xi_i$ for $1\leq i\leq n-1$ which satisfy the following relations:
\begin{enumerate}
\item[(1).] for $|i-j|>1$, $
\s_i\s_j=\s_j\s_i$, $\s_i\xi_j=\xi_j\s_i$, $\xi_i\xi_j=\xi_j\xi_i$;
\item[(2).] for $1\leq i\leq n-2$, $
\s_i\s_{i+1}\s_i=\s_{i+1}\s_i\s_{i+1}$, $\xi_i\xi_{i+1}\xi_i=\xi_{i+1}\xi_i\xi_{i+1}$,
$\xi_i\s_{i+1}\s_i=\s_{i+1}\s_i\xi_{i+1}$, $\xi_{i+1}\s_i\s_{i+1}=\s_{i}\s_{i+1}\xi_i$.
\end{enumerate}
\end{definition}

The group $\GVB_n$ can be looked as a double braided group where the two braidings are compatible.
\par
Let $\mathfrak{S}_n$, $\mathfrak{B}_n$ and $\VB_n$ denote the symmetric group, the braid group and the virtual braid group \cite{Kauf} with $n$ letters or on $n$ strands, respectively. The monoids associated to $\GVB_n$, $\VB_n$ and $\mathfrak{B}_n$ will be denoted by $\GVB_n^+$, $\VB_n^+$ and $\mathfrak{B}_n^+$.
\par
The notion of the generalized virtual braid group enlarges that of the braided group and its virtual version. Relations between them can be explained in the following commutative diagram:
\[
\xymatrix{\GVB_n \ar[r]^-\alpha \ar[d]^-\gamma & \mathfrak{B}_n\ar[d]^-\beta \\ \VB_n \ar[r]^-\delta & \mathfrak{S}_n,}
\]
where $\alpha$ (resp. $\beta$; $\gamma$; $\delta$) is the quotient by the normal sub-group generated by $\xi_i$ (resp. $\s_i^2$; $\s_i^2$; $\xi_i$) for $1\leq i\leq n-1$.

\section{Generalized Matsumoto-Tits section}\label{Sec3}

\subsection{Bubble decomposition}

The name for this decomposition of permutations in symmetric groups arises from the bubble sort (see \cite{TAOCP3}, Section 5.2.2) which is inefficient in practical use but has interesting theoretical applications. 
\par
The symmetric group  $\mathfrak{S}_n$ with generators $s_i=(i,i+1)$ acts on $n$ sites by permuting their positions.
\par
We fix an integer $n\geq 2$ and $p,q\in\mathbb{N}$ such that $p+q=n$. A permutation $\s\in\mathfrak{S}_n$ is called a $(p,q)$-shuffle if $\s^{-1}(1)<\cdots<\s^{-1}(p)$ and $\s^{-1}(p+1)<\cdots<\s^{-1}(p+q)$; the set of all $(p,q)$-shuffles in $\mathfrak{S}_n$ will be denoted by $\mathfrak{S}_{p,q}$.
\par
We fix $p,q$ as above. There exists a bijection $\mathfrak{S}_n\overset{\sim}{\longrightarrow} \mathfrak{S}_{p,q}\cdot (\mathfrak{S}_p\times\mathfrak{S}_q)$ where $\mathfrak{S}_p$ acts on the first $p$ positions and $\mathfrak{S}_q$ acts on the last $q$ positions. Iterating this bijection results the following  bubble decomposition of symmetric groups (a set theoretical bijection):
$$\mathfrak{S}_n\overset{\sim}{\longrightarrow}\mathfrak{S}_{n-1,1}\times \mathfrak{S}_{n-2,1}\times\cdots\times \mathfrak{S}_{2,1}\times \mathfrak{S}_{1,1}$$
where $\mathfrak{S}_{k,1}$ is embedded into $\mathfrak{S}_n$ at the first  $k+1$ positions. 
\par
For $\s\in\mathfrak{S}_n$, we let $(\s^{(n-1)},\s^{(n-2)},\cdots,\s^{(1)})$ denote its bubble decomposition according to the above bijection where if $\s^{(k)}\neq e$, we write
$$\s^{(k)}=s_{t_k(\s)}s_{t_k(\s)+1}\cdots s_k\in\mathfrak{S}_{k,1}$$
for $1\leq t_k(\s)\leq k$ (if $\s^{(k)}=e$, we demand $t_k(\s)=0$).

\begin{remark}
It is easy to show that $l(\s)=l(\s^{(1)})+\cdots+l(\s^{(n-1)})$, hence the bubble decomposition gives a reduced expression of $\s\in\mathfrak{S}_n$.
\end{remark}

We establish several properties of $t_k(\s)$ when $\s$ is a $(p,q)$-shuffle, which will be useful in the proof of Theorem \ref{thm:main}.

\begin{lemma}\label{Lem1}
Let $\s\in\mathfrak{S}_{p,q}$ be a $(p,q)$-shuffle. Then for any $1\leq k<p$, $t_k(\s)=0$.
\end{lemma}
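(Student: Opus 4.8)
The plan is to exploit the iterative nature of the bubble decomposition and then specialize to shuffles. The central sub-claim I would establish first is a \emph{reduction step}: writing the top factor as $\sigma=\sigma^{(n-1)}\cdot\tau$ with $\sigma^{(n-1)}\in\mathfrak{S}_{n-1,1}$ and $\tau\in\mathfrak{S}_{n-1}$, the factor $\sigma^{(n-1)}=s_{t_{n-1}(\sigma)}\cdots s_{n-1}$ is completely determined by the position $\sigma^{-1}(n)$ of the entry $n$ (so $t_{n-1}(\sigma)=\sigma^{-1}(n)$ when $\sigma^{-1}(n)<n$, and $0$ otherwise), and the complementary factor $\tau=(\sigma^{(n-1)})^{-1}\sigma$ is \emph{exactly} the permutation of $\{1,\dots,n-1\}$ obtained from $\sigma$ by deleting the entry $n$ and relabelling the remaining positions order-preservingly. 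Two consequences matter: first, $\tau^{-1}(v)$ and $\tau^{-1}(v')$ keep the same relative order as $\sigma^{-1}(v),\sigma^{-1}(v')$ for all $v,v'\le n-1$; second, by the very definition of the iterated bijection, $t_k(\sigma)=t_k(\tau)$ for every $k\le n-2$.

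Granting the reduction step, I would run an induction on $q$. For $q=0$ the only $(n,0)$-shuffle is the identity, all of whose $t_k$ vanish, which settles the case $k<p=n$. For the inductive step take $q\ge 1$ and $\sigma\in\mathfrak{S}_{p,q}$. Since $q\ge 1$, the largest value $n=p+q$ sits in the second block $\{p+1,\dots,n\}$, and deleting it from an interleaving of the increasing word $1\cdots p$ with $(p+1)\cdots n$ leaves an interleaving of $1\cdots p$ with $(p+1)\cdots(n-1)$. As $\tau$ is precisely this order-preserving deletion, $\tau\in\mathfrak{S}_{n-1}$ is a $(p,q-1)$-shuffle, so the induction hypothesis yields $t_k(\tau)=0$ for all $k<p$. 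Because $q\ge 1$ forces $p\le n-1$, every such $k$ satisfies $k\le n-2$, whence $t_k(\sigma)=t_k(\tau)=0$ for $1\le k<p$, as claimed.

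Unwinding this induction also gives the clean characterization $t_k(\sigma)=0\iff \sigma^{-1}(k+1)>\sigma^{-1}(j)$ for all $1\le j\le k$, i.e. the value $k+1$ occupies the rightmost position among $1,\dots,k+1$; from this the lemma is immediate, since for a $(p,q)$-shuffle and $k<p$ the chain $\sigma^{-1}(1)<\cdots<\sigma^{-1}(p)$ forces $\sigma^{-1}(k+1)$ to exceed every $\sigma^{-1}(j)$ with $j\le k$. I would likely present the argument in this characterization form if the reduction step is phrased carefully, as it isolates exactly the inequality being used.

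The step I expect to be the main obstacle is the reduction lemma itself: identifying the complementary factor $\tau$ with the order-preserving deletion of the top value, and verifying that the bubble tails of $\sigma$ and of $\tau$ coincide. Once the explicit one-line form of the generators $s_{t}\cdots s_{k}$ of $\mathfrak{S}_{k,1}$ and their effect on positions is in hand (with the fixed action convention), this amounts to checking that passing through $\sigma^{(n-1)}$ simply removes the position $\sigma^{-1}(n)$ and reindexes monotonically; everything downstream is then formal bookkeeping.
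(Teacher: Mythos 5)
Your proof is correct, but it takes a genuinely different route from the paper's. You peel off the \emph{top} bubble factor: since $\mathfrak{S}_{n-1,1}=\{e\}\cup\{s_t\cdots s_{n-1}\,:\,1\le t\le n-1\}$ and, under the paper's conventions, $s_t\cdots s_{n-1}$ places the value $n$ at position $t$, one gets $t_{n-1}(\s)=\s^{-1}(n)$ when $\s^{-1}(n)<n$, the complementary factor $\tau$ is the order-preserving deletion of the entry $n$, and $t_k(\s)=t_k(\tau)$ for $k\le n-2$ holds by the very definition of the iterated bijection; your induction is then on $q$ alone, using that deleting $n$ from a $(p,q)$-shuffle yields a $(p,q-1)$-shuffle (and $q\ge 1$ guarantees $k<p\le n-1$, so the reduction applies). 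The reduction step you flag as the main obstacle is indeed the only point needing care, but it is a routine verification with the one-line form of $s_t\cdots s_{n-1}$ and it is true, so the argument is complete in outline. The paper instead inducts on $n=p+q$ and peels off the \emph{first position}, via the recursion $\mathfrak{S}_{p,q}=\mathfrak{S}^R_{p-1,q}\sqcup\mathfrak{S}^R_{p,q-1}s_1\cdots s_p$ of (\ref{eqn1}) with the shift $s_k\mapsto s_{k+1}$, splitting a shuffle according to whether its first position carries the first letter of the first block or of the second block. What each approach buys: yours yields the sharper closed characterization $t_k(\s)=0\iff\s^{-1}(k+1)>\s^{-1}(j)$ for all $1\le j\le k$, from which the lemma is a one-liner and from which Lemma \ref{Lem2} and Corollary \ref{Cor1} would also drop out directly; the paper's choice looks clumsier for this lemma in isolation, but the recursion (\ref{eqn1}) is the engine of the whole paper --- it is reused verbatim in the Algorithm, in the proof of Lemma \ref{Lem2}, and in the proof of Theorem \ref{thm:main}, where it mirrors the inductive definition of the quantum quasi-shuffle product --- so proving the lemma by (\ref{eqn1}) amortizes machinery the paper needs anyway, whereas your deletion-of-the-maximum reduction, while cleaner here, does not directly produce that recursion.
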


\begin{proof}
Let $n=p+q$. We prove by induction on $n$ that for any $p$, any $\s\in\mathfrak{S}_{p,q}$ and $1\leq k<p$, $t_k(\s)=0$.
\par
Start by the case $n=3$, the only non-trivial case is $p=2$ and $q=1$. In this case we need to show $t_1(\s)=0$ for any $\s\in\mathfrak{S}_{2,1}$. This is clear since $\mathfrak{S}_{2,1}=\{e,s_2,s_1s_2\}$.
\par
For general $n$, we consider the following well-known decomposition of $\mathfrak{S}_{p,q}$:
\begin{equation}\label{eqn1}
\mathfrak{S}_{p,q}=\mathfrak{S}^R_{p-1,q}\,\sqcup\,\mathfrak{S}_{p,q-1}^Rs_1\cdots s_p
\end{equation}
where 
\begin{enumerate}
\item $\mathfrak{S}_{p-1,q}^R$ and $\mathfrak{S}_{p,q-1}^R$ are images of $\mathfrak{S}_{p-1,q}$ and $\mathfrak{S}_{p,q-1}$ in $\mathfrak{S}_{p,q}$ under the inclusion $s_k\mapsto s_{k+1}$;
\item the set $\mathfrak{S}_{p-1,q}^Rs_1\cdots s_p$ is defined by $\{ss_1\cdots s_p|\ s\in\mathfrak{S}_{p-1,q}^R\}$.
\end{enumerate}
Taking $\s\in\mathfrak{S}_{p,q}$, there are two cases:
\begin{enumerate}
\item $\s\in\mathfrak{S}_{p-1,q}^R$: by induction, for any $2\leq k<p$, $t_k(\s)=0$. It remains to show that $t_1(\s)=0$: this is a consequence of $\s\in\mathfrak{S}_{p-1,q}^R$.
\item $\s\in\mathfrak{S}_{p,q-1}^Rs_1\cdots s_p$: we write $\s=ss_1\cdots s_p$ for some $s\in\mathfrak{S}_{p,q-1}^R$. By induction, for any $2\leq k<p+1$, $t_k(s)=0$, therefore for any $2\leq k<p$, $t_k(ss_1\cdots s_p)=t_k(s)=0$ (in fact, $t_p(\s)=1$). It is clear that $t_1(\s)=0$ since $s\in\mathfrak{S}_{p,q-1}^R$ acts on the last $p+q-1$ positions.
\end{enumerate}
\end{proof}

Therefore the bubble decomposition of $\s\in\mathfrak{S}_{p,q}$ admits the form $\s=\s^{(n-1)}\cdots\s^{(p)}$.
\par
The proof of this lemma gives an effective algorithm to compute the bubble decomposition of $(p,q)$-shuffles.
\newline
\par
\noindent
\textbf{Algorithm.} We use the decomposition (\ref{eqn1}) in the proof of Lemma \ref{Lem1} to give a recursive algorithm to compute the bubble decomposition. Suppose that the bubble decompositions of elements in $\mathfrak{S}_{p-1,q}$ and $\mathfrak{S}_{p,q-1}$ are known. For a fixed $\s\in\mathfrak{S}_{p,q}$, there are two cases:
\begin{enumerate}
\item $\s\in\mathfrak{S}_{p-1,q}^R$: according to the lemma, we may suppose that $\s=\s^{(n-1)}\cdots\s^{(p)}$ is the bubble decomposition of $\s$ in $\mathfrak{S}_{p-1,q}^R$. This is also the bubble decomposition of $\s$ in $\mathfrak{S}_{p,q}$.
\item $\s\in\mathfrak{S}_{p,q-1}^Rs_1\cdots s_p$: we write $\s=ss_1\cdots s_p$, by Lemma \ref{Lem1} again, we suppose that the bubble decomposition of $s$ in $\mathfrak{S}_{p,q-1}^R$ reads $s=s^{(n-1)}\cdots s^{(p+1)}$, we denote $s^{(p)}=s_1\cdots s_p$, then $\s=s^{(n-1)}\cdots s^{(p+1)}s^{(p)}$ is the bubble decomposition of $\s$.
\end{enumerate}

This algorithm allows us to prove the following lemma.

\begin{lemma}\label{Lem2}
Let $\s\in\mathfrak{S}_{p,q}$ be a $(p,q)$-shuffle and $p<s\leq p+q-1$. Then $t_s(\s)>s-p$.
\end{lemma}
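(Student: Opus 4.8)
The plan is to induct on $n=p+q$, converting the group-theoretic recursion supplied by the decomposition (\ref{eqn1}) and the Algorithm into an arithmetic recursion on the indices $t_s$. I read the conclusion in the only non-vacuous way: for $p<s\leq p+q-1$ with $\s^{(s)}\neq e$ (equivalently $t_s(\s)\neq 0$) I will show $t_s(\s)>s-p$, the case $\s^{(s)}=e$ being empty since then $t_s(\s)=0$ by convention.

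The preliminary step, and the device that makes the induction run, is to record how the shift embedding $R\colon s_j\mapsto s_{j+1}$ acts on bubble decompositions. If $\tau\in\mathfrak{S}_m$ has bubble decomposition $\tau=\tau^{(m-1)}\cdots\tau^{(1)}$, then $R(\tau^{(k)})=s_{t_k(\tau)+1}\cdots s_{k+1}\in\mathfrak{S}_{k+1,1}$, and because these factors have strictly decreasing top index $k+1$, uniqueness of the bubble decomposition shows that $R(\tau)=\prod_k R(\tau^{(k)})$ is the bubble decomposition of $R(\tau)$. Hence, whenever $\tau^{(k)}\neq e$, we have $t_{k+1}(R\tau)=t_k(\tau)+1$. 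Substituting this identity (together with $t_p(\s)=1$ for the extra factor $s_1\cdots s_p$) into the Algorithm turns each recursive step into a clean relation between $t_s(\s)$ and a $t$-value of the smaller shuffle.

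Now the induction. By (\ref{eqn1}) there are two cases. In Case 1, $\s=R(\tau)$ with $\tau\in\mathfrak{S}_{p-1,q}$; setting $k=s-1$ the range $p<s\leq p+q-1$ becomes $(p-1)<k\leq(p-1)+q-1$, so the inductive hypothesis for $\mathfrak{S}_{p-1,q}$ yields $t_k(\tau)>k-(p-1)$ and therefore $t_s(\s)=t_k(\tau)+1>(k-p+1)+1=s-p+1>s-p$. In Case 2, $\s=R(\rho)\,s_1\cdots s_p$ with $\rho\in\mathfrak{S}_{p,q-1}$, so $t_p(\s)=1$ and, for $s\geq p+1$, $t_s(\s)=t_{s-1}(\rho)+1$. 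For $s\geq p+2$ the index $k=s-1$ lies in the inductive range for $\mathfrak{S}_{p,q-1}$, giving $t_s(\s)=t_{s-1}(\rho)+1>(s-1-p)+1=s-p$. The boundary index $s=p+1$ must be treated by hand, since then $k=p$ is not controlled by the inductive hypothesis (Lemma \ref{Lem1} only gives $t_k(\rho)=0$ for $k<p$): here $\s^{(p+1)}\neq e$ forces $\rho^{(p)}\neq e$, i.e. $t_p(\rho)\geq1$, so $t_{p+1}(\s)=t_p(\rho)+1\geq2>1=(p+1)-p$.

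I expect the main obstacle to be exactly this index bookkeeping: one must verify that the ``$+1$'' contributed by $R$, combined with the parameter shift $s\mapsto s-1$, lines up precisely with the ``$-p$'' (respectively ``$-(p-1)$'') in the two bounds, and one must isolate the cases that fall outside the inductive step — the vacuous ranges $p=0$ or $q\leq1$ (where $\mathfrak{S}_{p,q}=\{e\}$ or the index set is empty), which serve as the base of the induction, and the boundary index $s=p+1$ in Case 2, handled directly above. Once these are in place the two cases close immediately by substitution of the transformation rule $t_{k+1}(R\tau)=t_k(\tau)+1$ and of $t_p(\s)=1$.
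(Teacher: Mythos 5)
Your proof is correct and takes essentially the same route as the paper's: induction on $n=p+q$ via the decomposition (\ref{eqn1}), with the Algorithm's transformation rule $t_{k+1}(R\tau)=t_k(\tau)+1$ doing the index bookkeeping in both cases. You are in fact more careful than the paper at two points where its proof glosses over details --- your reading of the conclusion as applying only when $t_s(\s)\neq 0$ is forced (the literal statement fails, e.g., for $\s=e$ or for $s_1\in\mathfrak{S}_{1,2}$, and your reading is the one consistent with how Corollary \ref{Cor1} is used), and the boundary index $s=p+1$ in Case 2, which falls outside the inductive range and which the paper's proof silently absorbs, is treated explicitly and correctly in your argument.
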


\begin{proof}
We apply induction on $p+q=n$. The case $n=2$ is clear.
\par
For a general $n$ and $\s\in\mathfrak{S}_{p,q}$, we need to tackle the following two cases:
\begin{enumerate}
\item $\s\in\mathfrak{S}_{p-1,q}^R$: by induction, for any $p-1<s\leq p+q-2$, $t_{s+1}(\s)>s-p+1$, therefore $t_s(\s)>s-p$ for any $p<s\leq p+q-1$.
\item $\s=\s_0s_1\cdots s_p\in\mathfrak{S}_{p,q-1}^Rs_1\cdots s_p$: by induction, for any $p+1<s\leq p+q-1$, $t_s(\s_0)>s-p$. According to the second case of the above algorithm, $t_s(\s)>s-p$ for any $p<s\leq p+q-1$.
\end{enumerate}
\end{proof}

The following evident corollary is useful.

\begin{corollary}\label{Cor1}
Let $\s\in\mathfrak{S}_{p,q}$ be a $(p,q)$-shuffle. If $t_p(\s)\neq 1$, then for any $p\leq s\leq p+q-1$, $t_s(\s)\neq 1$.
\end{corollary}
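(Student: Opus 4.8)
The plan is to dispose of the corollary by separating the boundary index $s=p$ from the interior range $p<s\leq p+q-1$, and to observe that Lemma \ref{Lem2} already forces a strict inequality $t_s(\s)>1$ throughout the interior. First I would handle the endpoint $s=p$: here the desired conclusion $t_p(\s)\neq 1$ is precisely the standing hypothesis, so nothing needs to be proved.

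For the interior range $p<s\leq p+q-1$, I would invoke Lemma \ref{Lem2} directly, which gives $t_s(\s)>s-p$ for every such $s$. The key (and only) observation is that $s>p$ forces $s-p\geq 1$, so that $t_s(\s)>s-p\geq 1$, whence $t_s(\s)\geq 2$ and in particular $t_s(\s)\neq 1$. Thus the conclusion holds for all interior indices \emph{independently} of the hypothesis placed on $t_p(\s)$, since the content there is entirely supplied by the previous lemma.

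Combining the two cases yields $t_s(\s)\neq 1$ for all $p\leq s\leq p+q-1$, as claimed. I do not expect any genuine obstacle here: all the combinatorial work has already been carried out in Lemma \ref{Lem2}, and the corollary is little more than a repackaging of that estimate together with the hypothesis at the single index $s=p$. The only point requiring a moment's care is noticing that the strict inequality $s-p\geq 1$ is enough to upgrade $t_s(\s)>s-p$ to $t_s(\s)\neq 1$; this explains why the hypothesis $t_p(\s)\neq 1$ is needed only at the endpoint and plays no role anywhere else. This is exactly the sense in which the statement is, as the paper indicates, an evident corollary.
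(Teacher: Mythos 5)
Your proof is correct and is exactly the argument the paper intends: the paper gives no proof at all, labelling the corollary ``evident,'' and the evident reasoning is precisely your two-case split --- the hypothesis disposes of the endpoint $s=p$, while Lemma \ref{Lem2} gives $t_s(\s)>s-p\geq 1$, hence $t_s(\s)\geq 2$, for every interior index $p<s\leq p+q-1$. One pedantic caveat: under the paper's convention $t_s(\s)=0$ when $\s^{(s)}=e$ (a degenerate case the literal statement of Lemma \ref{Lem2} overlooks), your inequality chain does not apply, but the conclusion $t_s(\s)\neq 1$ holds trivially there, so your argument is unaffected.
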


\subsection{Generalized Matsumoto-Tits section}
The numbers $n,p,q$ are fixed as in the last subsection.
\par
We define the map $Q_{p,q}:\mathfrak{S}_{p,q}\ra\mc[\GVB_n^+]$ by
$$\s\mapsto\prod_{k=n,\s^{(k)}\neq e}^{k=1}(\s_{t_k}+(1-\delta_{t_k+1,t_{k+1}})\xi_{t_k})\s_{t_k+1}\cdots \s_k,$$
where $t_k=t_k(\s)$ and the product is executed in a descending way. These maps are called generalized Matsumoto-Tits section.

\begin{example}\label{Ex:1}
We take $n=4$ and $p=q=2$. The set of $(2,2)$-shuffles $\mathfrak{S}_{2,2}$ is given by $\{e, s_2, s_3s_2, s_1s_2, s_3s_1s_2, s_2s_3s_1s_2\}$. 
According to the bubble decomposition,
$$e\mapsto (e,e,e),\ \ s_2\mapsto (e,s_2,e),\ \ s_3s_2\mapsto (s_3,s_2,e),\ \ s_1s_2\mapsto (e,s_1s_2,e),$$
$$s_3s_1s_2\mapsto (s_3,s_1s_2,e),\ \ s_2s_3s_1s_2\mapsto (s_2s_3,s_1s_2,e).$$
Their images under the map $Q_{2,2}$ are:
$$Q_{2,2}(e)=e,\ \  Q_{2,2}(s_2)=\s_2+\xi_2,\ \  Q_{2,2}(s_3s_2)=(\s_3+\xi_3)\s_2,\ \ Q_{2,2}(s_1s_2)=(\s_1+\xi_1)\s_2,$$
$$Q_{2,2}(s_3s_1s_2)=(\s_3+\xi_3)(\s_1+\xi_1)\s_2,\ \ Q_{2,2}(s_2s_3s_1s_2)=(\s_2+\xi_2)\s_3\s_1\s_2.$$
\end{example}

Let $\wt{\gamma}:\mc[\GVB_n^+]\ra\mc[\VB_n^+]$ (resp. $\wt{\alpha}:\mc[\GVB_n^+]\ra\mc[\mathfrak{B}_n^+]$) be the quotient by the ideal generated by $\s_i^2-1$ (resp. $\xi_i$) for $1\leq i\leq n-1$. Their compositions with $Q_{p,q}$ give two maps 
$$V_{p,q}=\wt{\gamma}\circ Q_{p,q}:\mathfrak{S}_{p,q}\ra\mc[{\VB}_n^+],\ \ T_{p,q}=\wt{\alpha}\circ Q_{p,q}:\mathfrak{S}_{p,q}\ra\mc[\mathfrak{B}_n^+],$$
where $T_{p,q}$ coincides with the Matsumoto-Tits section $\mathfrak{S}_n\ra\mathfrak{B}_n^+$ (Theorem 4.2 in \cite{KT}). In the above example, $T_{2,2}(s_3s_1s_2)=\s_3\s_1\s_2$.
\par
These constructions are summarized in the following commutative diagram:
\[
\xymatrix{\mc[\mathfrak{B}_n^+] & \mc[{\GVB}_n^+]\ar[l]_-{\wt{\alpha}} \ar[d]^-{\wt{\gamma}}\\ \mathfrak{S}_{p,q}\ar[ur]^-{Q_{p,q}}\ar[u]^-{T_{p,q}} \ar[r]_-{V_{p,q}} & \mc[{\VB}_n^+]}.
\]

\section{Application: quantum quasi-shuffle product}

We will characterize in this section the quantum quasi-shuffle product using the generalized Matsumoto-Tits section.

\subsection{Braided algebra and quantum quasi-shuffle product}

The construction of quantum quasi-shuffle algebra can be viewed as a functor from the category of braided algebras to the category of braided Hopf algebras.

\begin{definition}
A braided algebra is a triple $(V,m,\s)$ where $V$ is a vector space, $m:V\ts V\ra V$ is an associative operation, $\s:V\ts V\ra V\ts V$ is a linear map such that on $V\ts V\ts V$, the following relations are verified:
$$(\s\ts\id)(\id\ts\s)(\s\ts\id)=(\id\ts\s)(\s\ts\id)(\id\ts\s),$$
$$(m\ts\id)(\id\ts\s)(\s\ts\id)=\s(m\ts\id),$$
$$(\id\ts m)(\s\ts\id)(\id\ts\s)=\s(\id\ts m).$$
\end{definition}

\begin{remark}
In the definition, $\s$ is not required to be invertible and the algebra $(V,m)$ is not required to be unitary.
\end{remark}

We fix a braided algebra $(V,m,\s)$ and let $T(V)=\bigoplus_{n\geq 0}V^{\ts n}$ to be the tensor vector space built on $V$. Each $V^{\ts n}$ admits a $\mathfrak{B}_n$-module structure through
$$\s_i\mapsto {\id}^{\ts(i-1)}\ts \s\ts {\id}^{\ts (n-i-1)}.$$
We denote $\beta_{p,1}=T_{p,1}(s_1\cdots s_{p-1}s_p)\in\mathfrak{B}_{p+1}^+$.

\begin{definition}[\cite{JRZ11}]\label{Def:qqs}
The quantum quasi-shuffle product is a family of operations $\ast_{p,q}:V^{\ts p}\underline{\ts} V^{\ts q}\ra T(V)$ defined by induction as follows: for $v_1,\cdots,v_{p+q}\in V$,
\begin{eqnarray*}
& &(v_1\ts\cdots\ts v_p)\ast_{p,q}(v_{p+1}\ts\cdots\ts v_{p+q})\\
&=& v_1\ts ((v_2\ts\cdots\ts v_p)\ast_{p-1,q}(v_{p+1}\ts\cdots\ts v_{p+q}))+\\
& &+(\id\ts\ast_{p,q-1})(\beta_{p,1}\ts{\id}^{\ts (q-1)})(v_1\ts\cdots\ts v_{p+q})+
\\& &+ (m\ts\ast_{p-1,q-1})(\id\ts\beta_{p-1,1}\ts{\id}^{\ts (q-1)})(v_1\ts\cdots\ts v_{p+q});
\end{eqnarray*}
when $p=0$ ou $q=0$, the operation is given by the scalar multiplication.
\end{definition}

It is proved in \cite{JRZ11} that the tensor space $T(V)$, endowed with this family of operations, is an associative algebra. It is called the quantum quasi-shuffle algebra and will be denoted by $T_{\s,m}(V)$.

\begin{example}\label{Ex:2}
\begin{enumerate}
\item When $p=q=1$, $v\ast_{1,1} w=(\id+\s+m)(v\ts w)\in V\oplus V\ts V$.
\item When $p=q=2$, the product $(v_1\ts v_2)\ast_{2,2}(v_3\ts v_4)$ is given by the action of
$$\id+\s_2+\s_3\s_2+m_3\s_2+\s_1\s_2+\s_3\s_1\s_2+m_3\s_1\s_2+\s_2\s_3\s_1\s_2+m_2\s_3\s_1\s_2+m_1\s_2+m_1\s_3\s_2+m_2m_1\s_2$$
on $v_1\ts v_2\ts v_3\ts v_4$ where $\s_i$ and $m_i$ signify that $\s$ and $m$ act on the positions $i$ and $i+1$.
\end{enumerate}
\end{example}

When $m=0$, the quantum quasi-shuffle product reduces to the quantum shuffle product (\cite{Ros95}, \cite{Ros98}). When $\s^2={\id}_{V\ts V}$, the quantum quasi-shuffle product gives the quasi-shuffle product (\cite{NR79}), if moreover the multiplication $m$ is commutative, this product is rediscovered by Hoffman in \cite{Hof00} to characterize the product of multiple zeta values (MZVs).
\par
The objective of this section is to realize these three products on $T(V)$ in a uniform framework.

\subsection{A representation-theoretical point of view on braided algebras}
In \cite{L12A} Lebed realized the associativity of an algebra as an exotic (pre-)braiding, which will be recalled in the following proposition. This construction allows us to reveal the nature of a braided algebra in the spirit of operads. 
\par
Let $V$ be a vector space with a specified element $\bold{1}\in V$, $m:V\ts V\ra V$ and $\nu:\mc\ra V$ be two linear maps where $\nu$ sends $1$ to $\bold{1}\in V$. We define a linear map $\s^m:V\ts V\ra V\ts V$ by $v\ts w\mapsto \bold{1}\ts m(v\ts w)$.

\begin{proposition}[\cite{L12A}]\label{Prop:Vic}
Suppose that for any $v\in V$, $m(v\ts\bold{1})=v$. Then $m$ is associative if and only if $\mathfrak{B}_3^+\ra\End(V^{\ts 3})$, $\s_1\mapsto \s^m\ts \id$, $\s_2\mapsto\id\ts\s^m$ defines a representation of $\mathfrak{B}_3^+$.
\end{proposition}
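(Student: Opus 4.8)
The plan is to reduce the statement to a single braid relation. Since $\mathfrak{B}_3^+$ is the monoid presented by the two generators $\s_1,\s_2$ subject only to $\s_1\s_2\s_1=\s_2\s_1\s_2$, the assignment $\s_1\mapsto A:=\s^m\ts\id$, $\s_2\mapsto B:=\id\ts\s^m$ extends to a monoid homomorphism $\mathfrak{B}_3^+\ra\End(V^{\ts 3})$ if and only if the images satisfy $ABA=BAB$ in $\End(V^{\ts 3})$. No invertibility of $\s^m$ is needed here, in accordance with the remark following the definition of a braided algebra. Thus it suffices to show that, under the hypothesis $m(v\ts\one)=v$, this relation is equivalent to the associativity of $m$.

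First I would unwind both composites on an arbitrary simple tensor $u\ts v\ts w\in V^{\ts 3}$, using repeatedly the defining formula $\s^m(a\ts b)=\one\ts m(a\ts b)$. Applying $A$, then $B$, then $A$, the two passive slots are successively overwritten by $\one$, and I expect to obtain
$$ABA(u\ts v\ts w)=\one\ts\one\ts m\bigl(m(u\ts v)\ts w\bigr),$$
where the final step uses $m(\one\ts\one)=\one$, itself a consequence of the unit hypothesis. Symmetrically, applying $B$, then $A$, then $B$, and using $m(u\ts\one)=u$ to clear the middle slot right after the first application of $A$, I expect
$$BAB(u\ts v\ts w)=\one\ts\one\ts m\bigl(u\ts m(v\ts w)\bigr).$$

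To conclude, I would observe that simple tensors span $V^{\ts 3}$, the maps involved are linear, and the two outputs agree in their first two tensor factors, both equal to $\one\ts\one$. Hence $ABA=BAB$ holds on all of $V^{\ts 3}$ precisely when $m\bigl(m(u\ts v)\ts w\bigr)=m\bigl(u\ts m(v\ts w)\bigr)$ for all $u,v,w\in V$, i.e. exactly when $m$ is associative. This settles both implications simultaneously.

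I do not anticipate a genuine obstacle: the argument is a bookkeeping computation. The only point demanding care is tracking the copies of $\one$ produced by $\s^m$ in its left output slot and invoking the unit condition $m(v\ts\one)=v$ at the correct moments (as $m(\one\ts\one)=\one$ on the $ABA$ side and as $m(u\ts\one)=u$ on the $BAB$ side). Conceptually, the essential mechanism is that $\s^m$ always plants a $\one$ in the first factor of its output, so that after three applications the two passive factors collapse to $\one\ts\one$ and the whole content is pushed into the last factor, where the two bracketings of $m$ surface and force associativity.
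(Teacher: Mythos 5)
Your proof is correct; the paper itself offers no proof of this proposition (it is recalled from Lebed's work \cite{L12A}), and your direct unwinding of $ABA$ and $BAB$ on simple tensors, using only the right-unit hypothesis $m(v\ts\one)=v$ (via $m(\one\ts\one)=\one$ and $m(u\ts\one)=u$), is exactly the standard verification, with the presentation of $\mathfrak{B}_3^+$ reducing everything to the single relation $ABA=BAB$. The only step worth making explicit is the last one: concluding associativity from $\one\ts\one\ts m(m(u\ts v)\ts w)=\one\ts\one\ts m(u\ts m(v\ts w))$ uses injectivity of $x\mapsto\one\ts\one\ts x$, which holds because $\one\neq 0$ (if $\one=0$, the hypothesis $m(v\ts\one)=v$ forces $V=0$ and the statement is trivial).
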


When a supplementary braiding structure is under consideration, this proposition can be generalized:

\begin{proposition}\label{Prop:GVB}
We suppose that for any $v\in V$, $m(v\ts\bold{1})=v$, $m(\bold{1}\ts v)=v$, $\s(\bold{1}\ts v)=v\ts\bold{1}$ and $\s(v\ts\bold{1})=\bold{1}\ts v$. Then $(V,\s,m)$ is a braided algebra if and only if $\GVB_3^+\ra \End(V^{\ts 3})$, $\s_1\mapsto \s\ts\id$, $\s_2\mapsto\id\ts\s$, $\xi_1\mapsto \s^m\ts\id$, $\xi_2\mapsto\id\ts\s^m$ defines a representation of $\GVB_3^+$.
\end{proposition}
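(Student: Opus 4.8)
The plan is to prove the equivalence by matching, one by one, the four defining relations of $\GVB_3$ on the images of the generators with the four conditions defining a braided algebra (associativity of $m$, the braid equation for $\s$, and the two compatibility axioms). Write $S_1=\s\ts\id$, $S_2=\id\ts\s$, $X_1=\s^m\ts\id$ and $X_2=\id\ts\s^m$. The relations to be verified are $S_1S_2S_1=S_2S_1S_2$, $X_1X_2X_1=X_2X_1X_2$, $X_1S_2S_1=S_2S_1X_2$ and $X_2S_1S_2=S_1S_2X_1$. I would show that the first is the braid axiom, the second is associativity, and the two mixed relations are the two compatibility axioms; establishing these four equivalences yields the stated `if and only if'.

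The two pure relations are immediate. The equation $S_1S_2S_1=S_2S_1S_2$ is verbatim the braid equation $(\s\ts\id)(\id\ts\s)(\s\ts\id)=(\id\ts\s)(\s\ts\id)(\id\ts\s)$, so these two conditions coincide. For $X_1X_2X_1=X_2X_1X_2$ I would invoke Proposition \ref{Prop:Vic}: since $\s^m(v\ts w)=\one\ts m(v\ts w)$ and the hypothesis $m(v\ts\one)=v$ holds, Lebed's result asserts precisely that this braid equation for $\s^m$ is equivalent to the associativity of $m$.

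The core is the pair of mixed relations. The mechanism is that $\s^m$ is the composite of $m$ with the insertion of $\one$ into the vacated slot, so each application of $X_1$ or $X_2$ performs one multiplication and leaves behind one factor $\one$; the boundary hypotheses $\s(v\ts\one)=\one\ts v$ and $\s(\one\ts v)=v\ts\one$ then transport that $\one$ through the surviving genuine braidings into a single fixed slot. Evaluating $X_1S_2S_1=S_2S_1X_2$ on $u\ts v\ts w$, the left-hand side places $\one$ in the first slot directly and equals $\one\ts\big[(m\ts\id)(\id\ts\s)(\s\ts\id)(u\ts v\ts w)\big]$, whereas on the right-hand side the $\one$ created by $X_2$ starts in the middle slot and is carried to the first slot by the ensuing $S_1$ via $\s(v\ts\one)=\one\ts v$, producing $\one\ts\big[\s(\id\ts m)(u\ts v\ts w)\big]$. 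Since $x\mapsto\one\ts x$ is injective, cancelling the common leading $\one$ shows this relation is equivalent to the identity $(m\ts\id)(\id\ts\s)(\s\ts\id)=\s(\id\ts m)$ of maps $V^{\ts 3}\ra V^{\ts 2}$, one of the compatibility axioms. The relation $X_2S_1S_2=S_1S_2X_1$ is handled symmetrically: both sides now carry their $\one$ in the middle slot (deposited directly by $X_2$ on the left, and slid there by $\s(\one\ts v)=v\ts\one$ on the right), and cancelling the injective insertion $x\ts y\mapsto x\ts\one\ts y$ reduces it to the remaining compatibility axiom $(\id\ts m)(\s\ts\id)(\id\ts\s)=\s(m\ts\id)$.

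The step I expect to be the main obstacle is exactly this bookkeeping: one must follow which slot each $\one$ occupies after each braiding and confirm that the four boundary hypotheses $m(\one\ts v)=v$, $m(v\ts\one)=v$, $\s(\one\ts v)=v\ts\one$, $\s(v\ts\one)=\one\ts v$ are precisely what is needed to bring the two sides into a common `unit-in-a-fixed-slot' shape. This is also where both directions of the equivalence are obtained simultaneously, since the reduction uses only the injectivity of the unit-insertion maps, so each compatibility axiom can be recovered from the corresponding mixed relation exactly as it implies it. Finally I would note that no invertibility of $\s$ is needed anywhere, which is consistent with the statement concerning the monoid $\GVB_3^+$ rather than the group $\GVB_3$.
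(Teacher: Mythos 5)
Your proposal is correct and follows essentially the same route as the paper: for $n=3$ only the relations in Definition \ref{Def:GVB} (2) need checking, the relation $\s_1\s_2\s_1=\s_2\s_1\s_2$ is verbatim the braid axiom, the relation $\xi_1\xi_2\xi_1=\xi_2\xi_1\xi_2$ is equivalent to associativity by Proposition \ref{Prop:Vic}, and the two mixed relations correspond to the two compatibility axioms. The paper compresses this last step into the single assertion that the mixed relations ``are exactly those in the definition of a braided algebra''; what you add is the honest content of that sentence, namely the bookkeeping that transports the unit $\one$ created by $\s^m$ through the genuine braidings using the four boundary hypotheses, and the observation that both directions of the equivalence come for free from the injectivity of the unit-insertion maps $x\mapsto\one\ts x$ and $x\ts y\mapsto x\ts\one\ts y$. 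Your closing remark that no invertibility of $\s$ is used, consistent with the statement being about the monoid $\GVB_3^+$, is also accurate.

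One point worth flagging: your computation reduces $\xi_1\s_2\s_1=\s_2\s_1\xi_2$ to $(m\ts\id)(\id\ts\s)(\s\ts\id)=\s(\id\ts m)$ and $\xi_2\s_1\s_2=\s_1\s_2\xi_1$ to $(\id\ts m)(\s\ts\id)(\id\ts\s)=\s(m\ts\id)$, whereas the paper's definition of a braided algebra pairs the two sides the other way around. Your pairing is the correct, standard one: taking $\s=P$ the flip and $m$ associative, the identity $(m\ts\id)(\id\ts P)(P\ts\id)=P(m\ts\id)$ as printed in the paper fails (the left side sends $u\ts v\ts w$ to $m(v\ts w)\ts u$, the right side to $w\ts m(u\ts v)$), while your version holds. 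So the discrepancy reflects a typo in the paper's definition (the right-hand sides of the two compatibility axioms are interchanged), not a gap in your argument; your careful slot-tracking in fact detects it.
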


\begin{proof}
It suffices to show that relation in Definition \ref{Def:GVB} (2) are verified. The second relation therein holds by Proposition \ref{Prop:Vic} and the other relations are exactly those in the definition of a braided algebra.
\end{proof}

\subsection{Quantum quasi-shuffle product revisited}

Let $(V,\s,m)$ be a braided algebra with $\nu:\mc\ra V$ sending $1$ to $\bold{1}\in V$ which satisfies 
$$m(v\ts\bold{1})=v,\ \ m(\bold{1}\ts v)=v,\ \  \s(\bold{1}\ts v)=v\ts\bold{1},\ \ \s(v\ts\bold{1})=\bold{1}\ts v.$$

\begin{remark}
The requirement of the existence of this specified element $\bold{1}\in V$ causes no trouble: given a braided algebra $(V,\s,m)$, enlarging $V$ by $\widetilde{V}=\mc\bold{1}\oplus V$ and naturally extending $\s$ and $m$ solves this problem.
\end{remark}

According to Proposition \ref{Prop:GVB}, for any $n\geq 1$, $V^{\ts n}$ admits a representation of $\GVB_n^+$ by
$$\s_i\mapsto {\id}^{\ts (n-1)}\ts \s\ts{\id}^{\ts (n-i-1)},\ \ \xi_i\mapsto {\id}^{\ts (n-1)}\ts \s^m\ts{\id}^{\ts (n-i-1)}.$$
\par
Let $\widehat{V}$ be the linear complement of $\mc\bold{1}$ in $V$.

\begin{definition}
\begin{enumerate}
\item A pure tensor $v_1\ts\cdots\ts v_k\in T(V)$ is called deletable if for any $ i=1,\cdots,k$, either $v_i=\bold{1}$ or $v_i\in\widehat{V}$.
\item The deleting operator $\bold{D}:T(V)\ra T(V)$ is defined as the linear map deleting all $\bold{1}$ in a deletable pure tensor.
\end{enumerate}
\end{definition}

For example, $\bold{D}(\bold{1}\ts v\ts w\ts \bold{1}\ts \bold{1}\ts u\ts \bold{1})=v\ts w\ts u$ where $u,v,w$ are elements in $\widehat{V}$.

\begin{theorem}\label{thm:main}
Let $\ast$ denote the quantum quasi-shuffle product on $T(V)$. Then for any $v_1,\cdots,v_{p+q}\in \widehat{V}$,
$$(v_1\ts\cdots\ts v_p)\ast_{p,q}(v_{p+1}\ts\cdots\ts v_{p+q})=\bold{D}\circ \sum_{s\in\mathfrak{S}_{p,q}}Q_{p,q}(s)(v_1\ts\cdots\ts v_{p+q}).$$ 
\end{theorem}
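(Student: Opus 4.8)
The plan is to argue by induction on $n=p+q$, matching the three-term recursion in Definition \ref{Def:qqs} term by term against a decomposition of the sum $\sum_{s\in\mathfrak{S}_{p,q}}Q_{p,q}(s)$. The representation-theoretic dictionary of Proposition \ref{Prop:GVB} is the organising principle: $\s_i$ acts as the braiding on the slots $(i,i+1)$, while $\xi_i$ acts as $\s^m$, i.e. it multiplies those two slots and leaves a $\mathbf{1}$ in position $i$. Thus each $\xi$-insertion in the expansion of $Q_{p,q}(s)$ is a merge producing exactly one spurious $\mathbf{1}$, which $\mathbf{D}$ is designed to erase at the very end. Throughout I work in the canonical setting of the preceding Remark, where $\widehat V$ is the original braided algebra and $\s,m$ restrict to $\s(\widehat V\ts\widehat V)\subseteq\widehat V\ts\widehat V$ and $m(\widehat V\ts\widehat V)\subseteq\widehat V$; this guarantees that every tensor produced by the $\s_i$ and $\xi_i$ is deletable, so that $\mathbf{D}$ is genuinely defined on the right-hand side, and that $\mathbf{D}(w\ts X)=w\ts\mathbf{D}(X)$ whenever $w\in\widehat V$.

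The technical heart is a compatibility lemma for $Q_{p,q}$ with the decomposition (\ref{eqn1}) and the bubble algorithm. Writing $\mathrm{sh}$ for the index shift $\s_j\mapsto\s_{j+1}$, $\xi_j\mapsto\xi_{j+1}$ on $\mc[\GVB_n^+]$, I would first prove: (i) if $s\in\mathfrak{S}^R_{p-1,q}$ is the image of $s'\in\mathfrak{S}_{p-1,q}$ then $Q_{p,q}(s)=\mathrm{sh}\,Q_{p-1,q}(s')$; and (ii) if $s=s_0s_1\cdots s_p$ with $s_0\in\mathfrak{S}^R_{p,q-1}$ the image of $s_0'\in\mathfrak{S}_{p,q-1}$, then
$$Q_{p,q}(s)=\big(\mathrm{sh}\,Q_{p,q-1}(s_0')\big)\cdot\big(\s_1+(1-\delta_{2,t_{p+1}(s)})\xi_1\big)\s_2\cdots\s_p.$$
Both follow from the algorithm after Lemma \ref{Lem1}, which identifies the bubble factors of $s$ at levels $>p$ with the shifted factors of $s'$ or $s_0'$; the only point requiring care is that the Kronecker factors $1-\delta_{t_k+1,t_{k+1}}$ are invariant under $\mathrm{sh}$, since $t_k$ and $t_{k+1}$ shift by the same amount.

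With the lemma in hand, the sum splits along (\ref{eqn1}). The family $\mathfrak{S}^R_{p-1,q}$ contributes $\mathrm{sh}\sum_{s'}Q_{p-1,q}(s')$, which by the inductive hypothesis and $\mathbf{D}(v_1\ts X)=v_1\ts\mathbf{D}(X)$ is precisely the first summand $v_1\ts\big((v_2\ts\cdots\ts v_p)\ast_{p-1,q}(v_{p+1}\ts\cdots\ts v_{p+q})\big)$. For the family $\mathfrak{S}^R_{p,q-1}s_1\cdots s_p$ I expand the last factor in (ii) into its $\s_1$-part and its $\xi_1$-part. The $\s_1$-part gives $\s_1\s_2\cdots\s_p=\beta_{p,1}$ and runs over all of $\mathfrak{S}_{p,q-1}$; since by Lemma \ref{Lem2} every bubble index at levels $>p$ is $\ge2$, these higher factors act only on slots $\ge2$, so after $\mathbf{D}$ and induction this reassembles into $(\id\ts\ast_{p,q-1})(\beta_{p,1}\ts\id^{\ts(q-1)})$, the second summand. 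The $\xi_1$-part is present exactly when $1-\delta_{2,t_{p+1}(s)}=1$, which via $t_{p+1}(s)=t_p(s_0')+1$ (or $0$) amounts to $t_p(s_0')\neq1$; by (\ref{eqn1}) applied to $\mathfrak{S}_{p,q-1}$ this is exactly the condition $s_0'\in\mathfrak{S}^R_{p-1,q-1}\cong\mathfrak{S}_{p-1,q-1}$. On such terms $\xi_1\s_2\cdots\s_p$ realises $\beta_{p-1,1}$ on slots $2,\dots,p+1$ followed by the merge $m$ of slots $1,2$; all remaining factors have index $\ge3$ by Lemma \ref{Lem2} and hence fix both the merged slot and the new $\mathbf{1}$, so applying (i) one level down identifies them with $\mathrm{sh}^2Q_{p-1,q-1}(\tau)$, and after $\mathbf{D}$ this is the third summand $(m\ts\ast_{p-1,q-1})(\id\ts\beta_{p-1,1}\ts\id^{\ts(q-1)})$.

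The main obstacle is precisely this last bookkeeping: showing that the Kronecker suppression in the definition of $Q_{p,q}$ forces the $\xi_1$-contributing shuffles to biject with $\mathfrak{S}_{p-1,q-1}$ rather than with all of $\mathfrak{S}_{p,q-1}$, so that the merge term is counted once and over the correct index set. This is where Corollary \ref{Cor1} and the index bounds of Lemma \ref{Lem2} do the real work, the latter ensuring that no higher bubble factor ever disturbs the front slots or the freshly created $\mathbf{1}$, so that $\mathbf{D}$ may be pushed to the very end and the surviving representation word matches the recursion verbatim. The base cases $p=0$ or $q=0$ (where $\ast$ is scalar multiplication and $Q_{p,q}(e)=e$) and $p=q=1$ (where the sum is $e+\s+\s^m$ and reproduces $\id+\s+m$ after $\mathbf{D}$, cf. Example \ref{Ex:2}) are immediate, and the $(2,2)$ computation of Examples \ref{Ex:1}--\ref{Ex:2} serves as a complete check of the three-term matching.
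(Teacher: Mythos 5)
Your proposal is correct and follows essentially the same route as the paper's own proof: induction on $n=p+q$, the decomposition (\ref{eqn1}) of $\mathfrak{S}_{p,q}$ (your two families are exactly the paper's $S_1$ and $S_2\sqcup S_3$), compatibility of $Q_{p,q}$ with the bubble algorithm via Lemmas \ref{Lem1}--\ref{Lem2} and Corollary \ref{Cor1}, and the Kronecker-delta bookkeeping that restricts the surviving $\xi_1$-terms to $\mathfrak{S}^R_{p-1,q-1}$, so that your claims (i)--(ii) amount precisely to the paper's identities (\ref{eq3})--(\ref{eq5}) matched against the three terms of Definition \ref{Def:qqs}. Your explicit verification of the shift-invariance of the delta factors and of the deletability and commutation properties of $\mathbf{D}$ spells out steps the paper leaves implicit, but the underlying argument is the same.
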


This theorem will be proved in the next subsection.

\begin{example}
The quantum quasi-shuffle product $(v_1\ts v_2)\ast_{2,2}(v_3\ts v_4)$ is computed in Example \ref{Ex:2}, it coincides with the lifting of $(2,2)$-shuffles through the generalized Matsumoto-Tits section in Example \ref{Ex:1}.
\end{example}

\begin{remark}
In \cite{GK00} and \cite{J13}, combinatorial formulas on the quasi-shuffle and quantum quasi-shuffle products are proved with the help of mixable shuffles. The combinatorial nature of Theorem \ref{thm:main} differs from those in the papers cited above. Our construction on the lifting of shuffles relies on the bubble decomposition, which is more natural and efficiently computable.
\par
We explain in the following example the difference between our lifting and the mixable shuffles. Consider the $(2,2)$-shuffles: in the context of mixable shuffles, the element $m_1\s_3\s_2$ corresponds to the $(2,2)$-shuffle $s_3s_2$; in our framework, as shown in Example \ref{Ex:1}, it arises from the $(2,2)$-shuffle $s_3s_1s_2$.
\end{remark}

As the quasi-shuffle product is a particular case ($\s^2=\id$) of the quantum quasi-shuffle product, composing with the morphism of monoid $\wt{\gamma}$ gives:

\begin{corollary}
Let $V$ be an associative algebra and $\bullet$ be the quasi-shuffle product on $T(V)$ (\cite{NR79}, \cite{Hof00}). Then for $v_1,\cdots,v_{p+q}\in V$ different from $\bold{1}$,
$$(v_1\ts\cdots\ts v_p)\bullet_{p,q}(v_{p+1}\ts\cdots\ts v_{p+q})=\bold{D}\circ \sum_{s\in\mathfrak{S}_{p,q}}V_{p,q}(s)(v_1\ts\cdots\ts v_{p+q}).$$
\end{corollary}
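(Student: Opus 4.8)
The plan is to deduce this statement from Theorem~\ref{thm:main} by specializing the braided algebra structure to the case $\s^2=\id_{V\ts V}$, in which the quantum quasi-shuffle product $\ast$ degenerates to the quasi-shuffle product $\bullet$ of \cite{NR79},\cite{Hof00}. An associative algebra $V$ equipped with a braiding $\s$ satisfying $\s^2=\id$, together with the unit compatibilities $m(v\ts\bold{1})=m(\bold{1}\ts v)=v$, $\s(\bold{1}\ts v)=v\ts\bold{1}$, $\s(v\ts\bold{1})=\bold{1}\ts v$ already imposed before Theorem~\ref{thm:main}, is in particular a braided algebra, so Theorem~\ref{thm:main} applies verbatim and expresses $\bullet_{p,q}$ as $\bold{D}\circ\sum_{s\in\mathfrak{S}_{p,q}}Q_{p,q}(s)$ acting on $v_1\ts\cdots\ts v_{p+q}$. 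It therefore remains only to show that, under the hypothesis $\s^2=\id$, the action of $Q_{p,q}(s)$ on $V^{\ts(p+q)}$ coincides with that of $V_{p,q}(s)=\wt{\gamma}(Q_{p,q}(s))$, which is exactly the content flagged in the sentence preceding the statement.

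First I would recall that, by Proposition~\ref{Prop:GVB} and the paragraph following it, the space $V^{\ts n}$ carries a representation $\rho\colon \mc[\GVB_n^+]\ra\End(V^{\ts n})$ with $\s_i\mapsto \id^{\ts(i-1)}\ts\s\ts\id^{\ts(n-i-1)}$ and $\xi_i\mapsto \id^{\ts(i-1)}\ts\s^m\ts\id^{\ts(n-i-1)}$, extended to an algebra morphism on $\mc[\GVB_n^+]$. The only point to verify is that $\rho$ factors through the quotient $\wt{\gamma}\colon\mc[\GVB_n^+]\ra\mc[\VB_n^+]$. This is immediate: the image of $\s_i$ has square $\id^{\ts(i-1)}\ts\s^2\ts\id^{\ts(n-i-1)}=\id$ precisely because $\s^2=\id_{V\ts V}$ (consistently, the unit relations force $\s^2$ to act as the identity also on tensors involving $\bold{1}$). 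Hence $\rho(\s_i^2-1)=0$ for every $i$, so $\rho$ annihilates the ideal generated by the $\s_i^2-1$, and by the universal property of the quotient algebra it factors uniquely as $\rho=\ol{\rho}\circ\wt{\gamma}$ for a representation $\ol{\rho}\colon\mc[\VB_n^+]\ra\End(V^{\ts n})$.

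With this factorization in hand I would conclude directly: for each $s\in\mathfrak{S}_{p,q}$ one has
$$\rho\bigl(Q_{p,q}(s)\bigr)=\ol{\rho}\bigl(\wt{\gamma}(Q_{p,q}(s))\bigr)=\ol{\rho}\bigl(V_{p,q}(s)\bigr),$$
so the two operators agree on $V^{\ts(p+q)}$. Summing over $s\in\mathfrak{S}_{p,q}$, applying $\bold{D}$, and invoking Theorem~\ref{thm:main} then yields the asserted formula with $V_{p,q}$ in place of $Q_{p,q}$. I do not expect a genuine obstacle here, as the argument is a clean specialization through $\wt{\gamma}$; the one point deserving care is the justification that $\s^2=\id$ truly collapses $\ast$ to the quasi-shuffle product $\bullet$. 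Rather than re-deriving this from the inductive Definition~\ref{Def:qqs}, I would cite it as the established degeneration recorded in the discussion preceding the corollary and in \cite{JRZ11},\cite{NR79}, or otherwise confirm it by a short induction on $p+q$ verifying that the third term of Definition~\ref{Def:qqs}, governed by $m$, reproduces exactly the contraction term of the quasi-shuffle recursion once $\s$ is an involution.
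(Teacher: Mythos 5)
Your proposal is correct and takes essentially the same route as the paper: the paper obtains the corollary in one line by noting that the quasi-shuffle product is the special case $\s^2=\id$ of the quantum quasi-shuffle product and composing Theorem \ref{thm:main} with the monoid-algebra quotient $\wt{\gamma}$, which is exactly your factorization $\rho=\ol{\rho}\circ\wt{\gamma}$ turning $Q_{p,q}(s)$ into $V_{p,q}(s)$ on $V^{\ts(p+q)}$. Your extra verifications (that the representation kills the ideal generated by the $\s_i^2-1$, that involutivity persists on tensors involving $\bold{1}$, and that $\ast$ degenerates to $\bullet$) just make explicit what the paper leaves implicit.
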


These formulas generalize those for quantum shuffle product $\diamondsuit$ (\cite{Ros95}, \cite{Ros98}): for a braided vector space $(V,\s)$,
$$(v_1\ts\cdots\ts v_p)\diamondsuit_{p,q}(v_{p+1}\ts\cdots\ts v_{p+q})=\sum_{s\in\mathfrak{S}_{p,q}}T_{p,q}(s)(v_1\ts\cdots\ts v_{p+q}).$$

\subsection{Proof of Theorem \ref{thm:main}}\label{sec:proof}

We prove it by induction on $n=p+q$. The case $p+q=1$ is clear.
\par
By the inductive definition of the quantum quasi-shuffle product in Definition \ref{Def:qqs} and the induction hypothesis, it suffices to show that
$$\sum_{s\in\mathfrak{S}_{p,q}}Q_{p,q}(s)=\sum_{s\in\mathfrak{S}_{p-1,q}^R}Q_{p-1,q}(s)+\sum_{s\in\mathfrak{S}_{p,q-1}^R}Q_{p,q-1}(s)\s_1\cdots\s_p+\D \sum_{s\in\mathfrak{S}_{p-1,q-1}^R}Q_{p-1,q-1}(s)\xi_1\s_2\cdots\s_p,$$
where the notations $\mathfrak{S}_{p,q-1}^R$ and $\mathfrak{S}_{p-1,q}^R$ are explained in the proof of Lemma \ref{Lem1}. The set $\mathfrak{S}_{p-1,q-1}^R$ is the image of $\mathfrak{S}_{p-1,q-1}$ in $\mathfrak{S}_{p,q}$ under the inclusion $s_i\mapsto s_{i+2}$ for $1\leq i\leq p+q-2$. 
\par
We divide $\mathfrak{S}_{p,q}$ into three disjoint subsets
$$\mathfrak{S}_{p,q}=S_1\sqcup S_2\sqcup S_3$$
where
$$S_1=\{\s\in\mathfrak{S}_{p,q}|\ t_p(\s)\neq 1\},\ \ S_2=\{\s\in\mathfrak{S}_{p,q}|\ t_p(\s)=1,\ t_{p+1}(\s)=2\},$$
$$S_3=\{\s\in\mathfrak{S}_{p,q}|\ t_p(\s)=1,\ t_{p+1}(\s)\neq 2\}.$$

We will in fact prove the following three identities, then taking the sum implies the formula desired:
\begin{equation}\label{eq3}
\sum_{\s\in S_1}Q_{p,q}(\s)=\sum_{\s\in\mathfrak{S}_{p-1,q}^R}Q_{p-1,q}(\s),
\end{equation}
\begin{equation}\label{eq4}
\sum_{\s\in S_2}Q_{p,q}(\s)=\sum_{\s\in\mathfrak{S}_{p,q-1}^R\backslash\mathfrak{S}_{p-1,q-1}^R}Q_{p,q-1}(\s^{(n-1)}\cdots \s^{(p+1)})\s_1\cdots\s_p,
\end{equation}
\begin{equation}\label{eq5}
\sum_{\s\in S_3}Q_{p,q}(\s)=\sum_{\s\in\mathfrak{S}_{p-1,q-1}^R}Q_{p-1,q-1}(\s^{(n-1)}\cdots \s^{(p+1)})(\s_1+\xi_1)\s_2\cdots\s_p.
\end{equation}

We start by giving some combinatorial explanations of these three sets.

\begin{enumerate}
\item We show that $S_1=\{\s\in\mathfrak{S}_{p,q}|\ \s(1)=1\}$. That is to say, for any $(p,q)$-shuffle $\s$, $\s(1)=1$ if and only if $t_p(\s)\neq 1$. According to Lemma \ref{Lem1} and Corollary \ref{Cor1}, if $t_p(\s)\neq 1$, then $s_1$ does not appear in a reduced expression of $\s$, from which $\s(1)=1$. On the other hand, if $\s(1)=1$, then $t_p(\s)\neq 1$ by Lemma \ref{Lem2}.
\item We prove that $S_2=(\mathfrak{S}_{p,q-1}^R\backslash\mathfrak{S}_{p-1,q-1}^R)s_1\cdots s_p$.
\par Let $\s\in S_2$, which means that $\s\in\mathfrak{S}_{p,q}$, $t_p(\s)=1$ and $t_{p+1}(\s)=2$. The condition $t_p(\s)=1$ implies that $\s^{(p)}=s_1\cdots s_p$. From the bubble decomposition and Lemma \ref{Lem1}, $\s=\s^{(n)}\cdots\s^{(p+1)}s_1\cdots s_p$. We show that $\s'=\s^{(n)}\cdots\s^{(p+1)}\in \mathfrak{S}_{p,q-1}^R\backslash\mathfrak{S}_{p-1,q-1}^R$: to show that it is not in $\mathfrak{S}_{p-1,q-1}^R$, whose all elements in $\mathfrak{S}_{p,q-1}$ preserving the position $2$, we consider $t_{p+1}(\s)$: if $t_{p+1}(\s)=2$, then $\s^{(p+1)}=s_2\cdots s_{p+1}$ will not preserve the position $2$; by Lemma \ref{Lem2}, so does $\s'$.
\par
We take $\s\in\mathfrak{S}_{p,q}$ with bubble decomposition $\s=\s^{(n-1)}\cdots \s^{(p+1)}s_1\cdots s_p$ where $\s'=\s^{(n-1)}\cdots \s^{(p+1)}\in\mathfrak{S}_{p,q-1}^R\backslash\mathfrak{S}_{p-1,q-1}^R$. Since $\s'\notin\mathfrak{S}_{p-1,q-1}^R$, $t_p(\s)=1$. Moreover, $\s'$ does not preserve the position $2$, then Lemma \ref{Lem2} forces $\s^{(p+1)}=s_2\cdots s_{p+1}$, from which $t_{p+1}(\s)=2$.
\item Finally we claim that $S_3=\mathfrak{S}_{p-1,q-1}^Rs_1\cdots s_p$. As $\mathfrak{S}_{p,q-1}^Rs_1\cdots s_p=\{\s\in\mathfrak{S}_{p,q}|\ t_p(\s)=1\}$, the desired identity is the complement of the one proved in (2).
\end{enumerate}

By definition, for $\s\in S_1$, $Q_{p,q}(\s)=Q_{p-1,q}(\s)$ and therefore (\ref{eq3}) holds. Let $\s\in S_2$. It admits the decomposition $\s=\s^{(n-1)}\cdots \s^{(p+1)}s_1\cdots s_p$, since $t_{p+1}(\s)=2$, in the definition of the generalized Matsumoto-Tits section, terms including the Kronecker $\delta$ vanish, it therefore gives
$$Q_{p,q}(\s)=Q_{p,q-1}(\s^{(n-1)}\cdots \s^{(p+1)})\s_1\cdots\s_p.$$
This proves (\ref{eq4}). Finally, for $\s\in S_3$, the same argument as above shows (\ref{eq5}). The proof of Theorem \ref{thm:main} terminates.

\section{Representations of generalized virtual braid groups and quantum groups}

We want to construct representations of the generalized virtual braid group via quantum groups: it requires us to investigate the other structure inside of a quantum group encoding the virtual crossings.

Let $V$ be a vector space. We let $P:V\ts V\ra V\ts V$ denote the usual flip defined by $P(v\ts w)=w\ts v$ for $v,w\in V$.

\subsection{Quasi-triangular Hopf algebras with a twist}

This structure we desired is firstly studied by Drinfeld in quasi-Hopf algebras and then by Reshetikhin \cite{Res} in the early stage of quantum groups.

\begin{definition}[\cite{Res}]\label{Def:AHQTT}
Let $H$ be a Hopf algebra and $R,F\in H\ts H$ be two invertible elements. The triple $(H,R,F)$ is called a quasi-triangular Hopf algebra with a twist (QTHAT) if
\begin{enumerate}
\item $(H,R)$ if a quasi-triangular Hopf algebra (QTHA) (\cite{Dri86});
\item $(\Delta\ts\id)(F)=F_{13}F_{23}$, $(\id\ts\Delta)(F)=F_{13}F_{12}$, $F_{12}F_{13}F_{23}=F_{23}F_{13}F_{12}$ where $F_{12}=F\ts 1$, $F_{23}=1\ts F$ et $F_{13}=\sum{a_i}\ts 1\ts b_i$ if $F=\sum a_i\ts b_i$. Such an $F$ is called a twist.
\end{enumerate}
If moreover $F$ satisfies $F_{21}F=1\ts 1$ where $F_{21}=\sum b_i\ts a_i$, the triple is called involutive.
\end{definition}

For example, any quasi-triangular Hopf algebra has a twist $F=1\ts 1$.
\par
In \cite{Res}, only involutive triples are considered. It is interesting to find non-involutive QTHATs.

The following lemma is well-known, whose proof is a standard exercise.

\begin{lemma}\label{Lem:QTHAT}
Let $(H,R,F)$ be a QTHAT. Then
$$R_{12}F_{13}F_{23}=F_{23}F_{13}R_{12},\ \ F_{12}F_{13}R_{23}=R_{23}F_{13}F_{12}.$$
\end{lemma}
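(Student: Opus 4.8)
The plan is to deduce both identities from one structural fact about the quasi-triangular structure, namely the intertwining relation $R\Delta(a)=\Delta^{\mathrm{op}}(a)R$ valid for every $a\in H$, together with the two coproduct axioms $(\Delta\ts\id)(F)=F_{13}F_{23}$ and $(\id\ts\Delta)(F)=F_{13}F_{12}$ for the twist. I would prove the first relation in full and obtain the second by the mirror-image argument.

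First I would record an auxiliary reformulation of the twist axioms. Writing $\tau_{12}$ for the flip of the first two tensor legs of $H^{\ts 3}$, which is an algebra homomorphism, applying $\tau_{12}$ to $(\Delta\ts\id)(F)=F_{13}F_{23}$ gives, since $\tau_{12}\circ(\Delta\ts\id)=(\Delta^{\mathrm{op}}\ts\id)$ and $\tau_{12}(F_{13}F_{23})=F_{23}F_{13}$, the identity $(\Delta^{\mathrm{op}}\ts\id)(F)=F_{23}F_{13}$. Symmetrically, applying the flip $\tau_{23}$ of the last two legs to $(\id\ts\Delta)(F)=F_{13}F_{12}$ yields $(\id\ts\Delta^{\mathrm{op}})(F)=F_{12}F_{13}$.

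Then, writing $F=\sum a_i\ts b_i$, I would compute the first relation by sliding $R$ through the coproduct in the first two legs:
\[
R_{12}F_{13}F_{23}=R_{12}(\Delta\ts\id)(F)=\sum\bigl(R\Delta(a_i)\bigr)_{12}\ts b_i=\sum\bigl(\Delta^{\mathrm{op}}(a_i)R\bigr)_{12}\ts b_i=(\Delta^{\mathrm{op}}\ts\id)(F)\,R_{12}=F_{23}F_{13}R_{12},
\]
where the third equality is the intertwining relation applied legwise and the last uses the auxiliary reformulation. The second relation follows from the same manipulation carried out with $R_{23}$ in the last two legs:
\[
R_{23}F_{13}F_{12}=R_{23}(\id\ts\Delta)(F)=\sum a_i\ts\bigl(R\Delta(b_i)\bigr)_{23}=\sum a_i\ts\bigl(\Delta^{\mathrm{op}}(b_i)R\bigr)_{23}=(\id\ts\Delta^{\mathrm{op}})(F)\,R_{23}=F_{12}F_{13}R_{23},
\]
which is the asserted equality after transposing the two sides.

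The argument is essentially bookkeeping, and the only point demanding care is the leg-numbering: one must check that $R_{12}$ (resp. $R_{23}$) multiplies precisely the pair of legs carrying $\Delta(a_i)$ (resp. $\Delta(b_i)$), so that the intertwining relation can be invoked factorwise, and that the correct flip turns each coproduct axiom into its $\Delta^{\mathrm{op}}$ form. I expect no genuine obstacle — consistent with the lemma being a standard exercise — and I note in particular that neither the hexagon relation $F_{12}F_{13}F_{23}=F_{23}F_{13}F_{12}$ nor the involutivity condition $F_{21}F=1\ts 1$ is needed, so the statement holds for arbitrary QTHATs.
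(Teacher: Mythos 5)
Your proof is correct, and it is exactly the ``standard exercise'' the paper alludes to: the paper itself gives no written proof of Lemma \ref{Lem:QTHAT}, so there is nothing to diverge from. All the bookkeeping checks out --- $\tau_{12}$ does carry $(\Delta\ts\id)(F)=F_{13}F_{23}$ to $(\Delta^{\mathrm{op}}\ts\id)(F)=F_{23}F_{13}$, and likewise for $\tau_{23}$, so the legwise application of $R\Delta(a)=\Delta^{\mathrm{op}}(a)R$ is legitimate --- and your closing observation is accurate: only the two coproduct axioms for $F$ and the intertwining axiom for $R$ are used, so neither $F_{12}F_{13}F_{23}=F_{23}F_{13}F_{12}$ nor involutivity plays any role.
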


The QTHATs allow us to construct tensor representations of the generalized virtual braid groups.

\begin{proposition}\label{Prop:QTHAT}
Let $(V,\rho)$ be a representation of a QTHAT $(H,R,F)$. We denote $\check{R}=P\circ(\rho\ts\rho)(R)$ and $\check{F}=P\circ (\rho\ts\rho)(F)$. Then 
$$\mu:{\GVB}_n\ra {\GL}(V^{\ts n}),$$
$$\s_i\mapsto{\id}^{\ts (i-1)}\ts \check{R}\ts {\id}^{\ts (n-i-1)},\ \ \xi_i\mapsto{\id}^{\ts (i-1)}\ts \check{F}\ts {\id}^{\ts (n-i-1)}$$
defines a representation of the group $\GVB_n$. Moreover, if the triple $(H,R,F)$ is involutive, $\mu$ factorizes through $\VB_n$ and gives a representation of the virtual braid group $\VB_n$.
\end{proposition}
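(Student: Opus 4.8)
The plan is to verify that the assignment $\mu$ respects every defining relation of $\GVB_n$ from Definition \ref{Def:GVB}. Since the generators $\s_i,\xi_i$ are sent to operators supported on adjacent tensor factors, each relation reduces to a finite computation in a bounded number of factors: the far-commutation relations in (1) with $|i-j|>1$ are automatic because the two operators act on disjoint tensor slots and hence commute, while every relation in (2) involves only consecutive indices and can be checked on $V^{\ts 3}$ after padding with identities. So after this reduction the real content is to confirm six operator identities on $V^{\ts 3}$.

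First I would dispose of the two braid relations. The relation $\s_i\s_{i+1}\s_i=\s_{i+1}\s_i\s_{i+1}$ becomes $(\check R\ts\id)(\id\ts\check R)(\check R\ts\id)=(\id\ts\check R)(\check R\ts\id)(\id\ts\check R)$, which is precisely the statement that $\check R$ is a solution of the braid equation; this follows from the quantum Yang--Baxter equation satisfied by $R$ in any QTHA. Likewise $\xi_i\xi_{i+1}\xi_i=\xi_{i+1}\xi_i\xi_{i+1}$ is the braid equation for $\check F$, which is exactly the twist hexagon relation $F_{12}F_{13}F_{23}=F_{23}F_{13}F_{12}$ transported through $P\circ(\rho\ts\rho)$. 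The standard point here is the bookkeeping translating a relation $A_{12}B_{13}C_{23}=\cdots$ in $H^{\ts 3}$ into the corresponding relation among $\check R,\check F$ on $V^{\ts 3}$: conjugating by the flips $P$ turns the $R$-matrix form into the $\check R$-operator form, and this is the one genuinely fiddly bookkeeping step.

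The heart of the matter is the two mixed relations $\xi_i\s_{i+1}\s_i=\s_{i+1}\s_i\xi_{i+1}$ and $\xi_{i+1}\s_i\s_{i+1}=\s_i\s_{i+1}\xi_i$. These are exactly the two identities furnished by Lemma \ref{Lem:QTHAT}, namely $R_{12}F_{13}F_{23}=F_{23}F_{13}R_{12}$ and $F_{12}F_{13}R_{23}=R_{23}F_{13}F_{12}$, once I pass from the $R$-matrix picture in $H^{\ts 3}$ to the operator picture in $\End(V^{\ts 3})$. I expect the main obstacle to be precisely this translation: the relations in Lemma \ref{Lem:QTHAT} are written in the ``numerical'' $R$-matrix convention where subscripts label tensor legs and multiplication is in $H^{\ts 3}$, whereas the braid-group relations are written for composition of operators built from $\check R=P\circ(\rho\ts\rho)(R)$ and $\check F=P\circ(\rho\ts\rho)(F)$, and matching the two requires keeping careful track of how the flips $P$ permute the active factors. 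I would set this up once cleanly (recording how $\s_1,\s_2$ act as $\check R\ts\id$ and $\id\ts\check R$, and similarly for $\xi$) and then read off each mixed relation from the corresponding line of Lemma \ref{Lem:QTHAT}.

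For the final assertion, suppose the triple is involutive, so $F_{21}F=1\ts 1$. Under $P\circ(\rho\ts\rho)$ this becomes $\check F^{\,2}=\id_{V\ts V}$, whence $\mu(\xi_i)^2=\id$ for every $i$. Thus $\mu$ kills the normal subgroup generated by the $\xi_i^2$; but by the diagram in Section \ref{Sec1} the quotient of $\GVB_n$ by $\xi_i^2$ (together with the already-imposed relations) is $\VB_n$, so $\mu$ factors through $\VB_n$ and defines a representation of the virtual braid group, as claimed.
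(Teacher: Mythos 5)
Your overall architecture is exactly the paper's own proof, which consists of the single sentence that the proposition ``is a consequence of the definition of a QTHAT and Lemma \ref{Lem:QTHAT}'': the reduction to relations on $V^{\ts 3}$, the far commutations, the braid relation for $\check{R}$ from the quantum Yang--Baxter equation, the braid relation for $\check{F}$ from $F_{12}F_{13}F_{23}=F_{23}F_{13}F_{12}$, and $\check{F}^{2}=(\rho\ts\rho)(F_{21}F)=\id$ in the involutive case are all fine. The gap sits precisely at the step you yourself identify as the crux and then only promise to carry out: the claim that the two mixed relations ``are exactly the two identities furnished by Lemma \ref{Lem:QTHAT}.'' If you actually push the flips through (write $\check{X}_{12}=P_{12}X_{12}$, $\check{X}_{23}=P_{23}X_{23}$ and use $P_{12}X_{23}P_{12}=X_{13}$, $P_{23}X_{12}P_{23}=X_{13}$, etc.), the printed relation $\xi_1\s_2\s_1=\s_2\s_1\xi_2$ with the printed assignment ($\s_i\mapsto\check{R}$-operators, $\xi_i\mapsto\check{F}$-operators) becomes $F_{23}R_{13}R_{12}=R_{12}R_{13}F_{23}$, and $\xi_2\s_1\s_2=\s_1\s_2\xi_1$ becomes $F_{12}R_{13}R_{23}=R_{23}R_{13}F_{12}$ --- identities with one $F$ and two $R$'s. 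Lemma \ref{Lem:QTHAT} furnishes identities of the opposite shape (one $R$, two $F$'s), and the one-$F$-two-$R$ identities do \emph{not} follow from the QTHAT axioms: they would require $F$ to intertwine $\Delta$ with $\Delta^{op}$ like an $R$-matrix, and they fail already for the trivial twist $F=1\ts 1$, where they degenerate to $R_{12}R_{13}=R_{13}R_{12}$, which is false, e.g., for $U_\hh(\ssl_2)$ on its two-dimensional module. What the Lemma does prove are the relations $\s_1\xi_2\xi_1=\xi_2\xi_1\s_2$ and $\s_2\xi_1\xi_2=\xi_1\xi_2\s_1$, i.e.\ the mixed relations with the roles of $\s$ and $\xi$ exchanged; the verification therefore closes only after swapping the assignment (or, equivalently, reading the mixed relations of Definition \ref{Def:GVB} with one $\s$ and two $\xi$'s). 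The paper's one-line proof glosses the same convention mismatch, but since this pairing is the sole genuine content of the proposition, asserting it without the computation --- and asserting the wrong pairing --- is a real gap in your write-up.

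A second, smaller inaccuracy occurs in your last step: you justify the factorization by saying that ``by the diagram in Section \ref{Sec1}'' the quotient of $\GVB_n$ by the $\xi_i^2$ is $\VB_n$, but the diagram defines $\gamma$ as the quotient by the normal subgroup generated by the $\s_i^2$ (and later $\wt{\gamma}$ kills $\s_i^2-1$). Your computation that involutivity gives $\mu(\xi_i)^2=\id$ is correct, and it does yield a factorization through the quotient of $\GVB_n$ by the normal subgroup generated by the $\xi_i^2$ --- the virtual-braid-type group the proposition evidently intends, with the $\xi_i$ as the involutive virtual generators, consistent with the role swap above --- but that is not the group the paper labels $\VB_n$, so the appeal to the diagram as stated does not close the step.
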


\begin{proof}
This proposition is a consequence of the definition of a QTHAT and Lemma \ref{Lem:QTHAT}.
\end{proof}

\subsection{Example: quantum groups}

Let $\g$ be a finite dimensional semi-simple Lie algebra of rank $l$ over $\mathbb{C}$ and $U_\hh(\g)$ be the $\hh$-adic version of the quantum group associated to $\g$ (\cite{Dri86}) with generators $E_i$, $F_i$ and $H_i$ for $i=1,\cdots,l$.
\par
The Hopf algebra $U_\hh(\g)$ is quasi-triangular with the universal R-matrix $R$ (\cite{Dri86}). Reshetikhin constructed in \cite{Res} a non-trivial element $F\in U_\hh(\g)\ts U_\hh(\g)$:
$$F=\exp\left(\sum_{i<j}\vp_{ij}(H_i\ts H_j-H_j\ts H_i)\right)$$
such that $(U_\hh(\g),R,F)$ is an involutive QTHAT where $\vp_{ij}\in\mathbb{C}$ are constants. 
\par
The following theorem is a direct consequence of Proposition \ref{Prop:QTHAT}.

\begin{theorem}\label{Thm:GVB}
Let $W$ be a finite dimensional representation of the quantum group $U_\hh(\g)$. Then for any $n\geq 1$, $W^{\ts n}$ admits a $\GVB_n$-module structure which is factorizable through $\VB_n$.
\end{theorem}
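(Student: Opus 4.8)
The plan is to deduce the statement directly from Proposition~\ref{Prop:QTHAT}, so that the proof reduces to checking that the hypotheses of that proposition are met by the quantum group data. The two ingredients needed are already supplied: by Drinfeld the pair $(U_\hh(\g),R)$ is a quasi-triangular Hopf algebra, and by Reshetikhin the element $F=\exp(\sum_{i<j}\vp_{ij}(H_i\ts H_j-H_j\ts H_i))$ is a twist making $(U_\hh(\g),R,F)$ an \emph{involutive} QTHAT in the sense of Definition~\ref{Def:AHQTT}. Hence, once these are recorded, the main construction is merely an unwinding of Proposition~\ref{Prop:QTHAT}.

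First I would fix a finite dimensional representation $(W,\rho)$ of $U_\hh(\g)$ and form the endomorphisms $\check R=P\circ(\rho\ts\rho)(R)$ and $\check F=P\circ(\rho\ts\rho)(F)$ of $W\ts W$. Here a small verification is in order: although $R$ and $F$ a priori live in a completed tensor product of $U_\hh(\g)$ with itself, the finite dimensionality of $W$ ensures that $(\rho\ts\rho)(R)$ and $(\rho\ts\rho)(F)$ are genuine invertible operators on $W\ts W$, because the quasi-$R$-matrix acts through an $\hh$-adically convergent sum and the Cartan contributions, including the exponential defining $F$, act diagonally through the weights of $W$. Feeding $\check R$ and $\check F$ into Proposition~\ref{Prop:QTHAT} then produces, for every $n\geq 1$, a homomorphism $\mu:\GVB_n\ra\GL(W^{\ts n})$ with $\s_i\mapsto{\id}^{\ts (i-1)}\ts\check R\ts{\id}^{\ts (n-i-1)}$ and $\xi_i\mapsto{\id}^{\ts (i-1)}\ts\check F\ts{\id}^{\ts (n-i-1)}$, which is exactly the asserted $\GVB_n$-module structure on $W^{\ts n}$.

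To obtain the factorization through $\VB_n$, I would invoke involutivity. The condition $F_{21}F=1\ts 1$ from Definition~\ref{Def:AHQTT}, after applying $\rho\ts\rho$ and composing with the flip, yields $\check F^2=\id_{W\ts W}$, since $P(\rho\ts\rho)(F)P=(\rho\ts\rho)(F_{21})$ and therefore $\check F^2=(\rho\ts\rho)(F_{21}F)=\id$; in particular $\mu(\xi_i)^2=\id$ for each $i$. By the ``moreover'' clause of Proposition~\ref{Prop:QTHAT}, $\mu$ then descends along the quotient $\gamma:\GVB_n\ra\VB_n$, giving the desired $\VB_n$-module structure.

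The only point that is not purely formal is the well-definedness of $\check R$ and $\check F$ on $W^{\ts n}$ in the $\hh$-adic framework, indicated above; I do not expect this to present a genuine obstacle, as it is the standard mechanism by which the $R$-matrix of $U_\hh(\g)$ acts on finite dimensional modules. All the braid-type compatibilities among $\check R$ and $\check F$ are already encapsulated in Lemma~\ref{Lem:QTHAT} together with the QTHA axioms, which Proposition~\ref{Prop:QTHAT} has absorbed, so no further relation needs to be checked by hand.
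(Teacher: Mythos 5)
Your proposal is correct and follows exactly the paper's route: the paper proves Theorem~\ref{Thm:GVB} as a direct consequence of Proposition~\ref{Prop:QTHAT} applied to Reshetikhin's involutive QTHAT $(U_\hh(\g),R,F)$, which is precisely your argument. Your additional remarks on $\hh$-adic well-definedness of $\check R$ and $\check F$ on finite dimensional modules are a careful (and accurate) spelling-out of a point the paper leaves implicit, not a deviation in method.
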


\begin{remark}
The categorical interpretation of the virtual braids \cite{L12B} can be generalized to the case of this paper to obtain the notion of the double braided categories. The category of finite dimensional representations of the quantum group $U_\hh(\g)$ admits such a structure.
\end{remark}

\begin{problem}\label{Pb:1}
Theorem \ref{Thm:GVB} serves as the starting point of the quantum invariant theory of the virtual links. That is to say, we want to know whether the construction of Turaev \cite{V88} can be modified to fit for the virtual links.
\end{problem}

\begin{problem}
The virtual Hecke algebra $\VH_n(q)$ is a quotient of the generalized virtual braid group $\GVB_n$ by relations: $\xi_i^2=1$, $\s_i^2=(q-1)\s_i+q$ (equivalently, it is a quotient of the virtual braid group $\VB_n$ by the relation $\s_i^2=(q-1)\s_i+q$). Problem \ref{Pb:1} is equivalent to the study of the existence of a virtual Markov trace on $\VH_n(q)$. Moreover, it would be interesting to find the object having Schur-Weyl duality with $\VH_n(q)$, it is expected to be some object interpolating the classical $\mathfrak{gl}_n$ and quantum $\mathfrak{sl}_n$.
\end{problem}

\begin{problem}
As the universal R-matrix admits the uniqueness property. It is natural to ask for a classification of the invertible elements $F\in U_\hh(\g)\widehat{\ts} U_\hh(\g)$ satisfying the point (2) in Definition \ref{Def:AHQTT} and involutive.
\end{problem}

\section*{Acknowledgements}
I would like to thank Run-Qiang Jian for stimulating discussions. This work is partially supported by the National Natural Science Foundation of China (Grant No. 11201067).

\end{document}